\title[$C^*$-envelopes of universal free products]{$C^*$-envelopes
of universal free products and semicrossed products for
multivariable dynamics}
\author{Benton L. Duncan}
\address{Department of Mathematics\\
300 Minard Hall\\
North Dakota State University\\
Fargo, ND  58105-5075\\
USA}
\email{benton.duncan@ndsu.edu}
\subjclass[2000]{47L30, 46L09}
\keywords{$C^*$-envelopes, free products, semicrossed products,
universal operator algebras, multivariable dynamics}
\begin{document}

\theoremstyle{plain}
\newtheorem{thm}{Theorem}
\newtheorem{lem}{Lemma}
\newtheorem{prop}{Proposition}
\newtheorem{cor}{Corollary}

\theoremstyle{definition}
\newtheorem{dfn}{Definition}
\newtheorem*{construction}{Construction}
\newtheorem*{example}{Example}

\theoremstyle{remark}
\newtheorem*{conjecture}{Conjecture}
\newtheorem*{acknowledgement}{Acknowledgements}
\newtheorem{rmk}{Remark}

\begin{abstract} We show that for a class of operator algebras
satisfying a natural condition the $C^*$-envelope of the universal
free product of operator algebras $A_i$ is given by the free product
of the $C^*$-envelopes of the $A_i$. We apply this theorem to, in
special cases, the $C^*$-envelope of the semicrossed products for
multivariable dynamics in terms of the single variable semicrossed
products of Peters.
\end{abstract}

\maketitle

An important invariant for non-selfdajoint operator algebras is the
$C^*$-envelope.  This is a minimal $C^*$-algebra containing the
operator algebra in a completely isometric manner.  The utility of
such a $C^*$-algebra was laid out in \cite{Arveson:1969} and
\cite{Arveson:1972} and its existence was proved by Hamana in
\cite{Hamana:1979} using injective envelopes.

Unfortunately as with most universal objects identifying the
requisite $C^*$-algebra is often difficult, and is often carried out
on a case by case basis.  There have been several important classes
of operator algebras which have received intensive study: the
semigroupoid algebras of \cite{Kribs-Power:2003a} as special cases
of the tensor algebras over $C^*$-correspondences of
\cite{Muhly-Solel:1998}, and the semicrossed products of
\cite{Peters:1984}.  Both algebras try to encode some sort of
dynamics on an underlying $C^*$-algebra.  In both of these cases
however the dynamics are constrained significantly by either
avoiding interactions between morphisms as in the first algebras, or
by constraining the dynamics to a single variable in the case of the
second algebra.

Recently a new attempt at multivariable dynamics has been initiated
in \cite{Davidson-Katsoulis:2007}.  There, two possible universal
objects related to a multivariable system of dynamics are defined
and studied.  In particular they let $\tau = ( \tau_1, \tau_2,
\cdots, \tau_n)$ be a tuple of continuous self maps of a locally
compact Hausdorff space.  They then study universal operator
algebras which encode these dynamics.  To do this they look at the
universal operator algebra generated by $C_0(X)$ and contractions
$S_i$ encoding the dynamics of $ \tau_i$ via a covariance relation
$S_if(x) = f(\tau_i(x))S_i$ for all $ f \in C_0(X)$.  There are two
universal operator algebras they study the first they call the
semicrossed product, and the second the tensor algebra.  The only
difference being an additional constraint on the tensor algebra,
that the contractions $S_i$ are a family of row contractions.  This
additional constraint allows a cleaner analysis and more concrete
theorems.  In particular, building on the groundbreaking work in
\cite{Katsoulis-Kribs:2006} and \cite{Muhly-Solel:1998}, the
$C^*$-envelopes of the tensor algebras are identified in Theorem 5.1
of \cite{Davidson-Katsoulis:2007}.

The semicrossed products however are less tractable since they lack
this constraint.  While some results can be proved in these examples
they are often less satisfying.  In particular a good understanding
of the $C^*$-envelopes is lacking.  In this paper we begin to
address this issue by recognizing the semicrossed products for
multivariable dynamics as universal free products of the semicrossed
products of \cite{Peters:1984}.  We then apply a result for
$C^*$-envelopes of free products in the context of certain
semicrossed products.

In the first section we remind the reader of the universal objects
we study and their universal properties.  First are the universal
free products which were constructed intrinsically in
\cite{Blecher-Paulsen:1991}.  We then focus on the $C^*$-envelope a
now standard object in the non-selfadjoint operator algebra
literature.  We refer the reader to the books \cite{Paulsen:2002}
and \cite{Blecher-Lemerdy:2004} for very readable accounts on both
of these objects.

In the second section we prove some easy facts about the unique
extension property for completely positive maps following
\cite{Arveson:2005} and \cite{Dritschel-Mccullouch:2006}. We then
prove in the third section Theorem \ref{main} that allows us to
relate the $C^*$-envelope and universal free products. In the final
two sections we bring these results together and apply them to
certain semicrossed products for multivariable dynamics to calculate
the $C^*$-envelope of the semicrossed products.

\section{Free products and $C^*$-envelopes of operator algebras}

We begin by reminding the reader of the universal properties for
free products and $C^*$-envelopes.

We assume the following, $\{ A_i \}$  is a collection of operator
algebras, sharing a common $C^*$-subalgebra which we will call $D$.
We will denote by $*_D A_i$  the universal operator algebra free
product with amalgamation over $D$.  In particular, we mean the
universal operator algebra satisfying the following universal
property.

\begin{center}{\em Universal Property for $*_D A_i$:}\end{center}
The algebra $*_DA_i$ is the universal operator algebra such that:
\begin{enumerate} \item there exists completely isometric
isomorphisms $\iota_i:A_i \rightarrow
*_DA_i$
\item given $\pi_i:A_i \rightarrow S$ completely contractive homomorphisms into
the operator algebra $S$, such that for all $i$ and $j$ we have $
\pi_i(x) = \pi_j(x)$ for all $x \in D$ there is a unique completely
contractive homomorphism $* \pi_i: *_D A_i \rightarrow S$ such that
$ (* \pi_i) \circ \iota_i = \pi_i$. \end{enumerate}

Notice that when the $A_i$ are $C^*$-algebras then $*_DA_i$ is a
$C^*$-algebra.

Given an operator algebra $A$ there is a unique $C^*$-algebra,
denoted $C^*_e(A)$ satisfying the following universal property.

\begin{center}{\em Universal Property for $C^*_e(A)$}: \end{center}
The $C^*$-algebra $C^*_e(A)$ is the universal $C^*$-algebra such
that: \begin{enumerate} \item there is a completely isometric
isomorphism $\iota_A: A \rightarrow C^*_e(A)$
\item the set $\iota_A(A)$ generates $C^*_e(A)$ as a $C^*$-algebra and
\item given a completely isometrically isomorphism $\pi: A \rightarrow C$
where $C$ is a $C^*$-algebra generated by $ \pi(A)$, there is a
unique onto $*$-homomorphism $\tilde{\pi}: C \rightarrow C^*_e(A)$
such that $ \tilde{\pi} \circ \pi = \iota_A$.\end{enumerate}

\section{The unique extension property and the main theorem}

We refer the reader to \cite{Arveson:2005} for a discussion of
boundary representations, Silov ideals, and the unique extension
property of unital completely positive maps.  Here we remind the
reader of some definitions and then prove a simple proposition that
is helpful in the context of universal free products.

If $A$ is a unital operator algebra and $C$ is a $C^*$-algebra such
that there is a completely isometric representation $\iota: A
\rightarrow C$, with $C^*(\iota(A)) = C$ we say that $C$ is
generated by $A$.  In this case we will usually drop reference to
the map $\iota$ and just denote the $C^*$-algebra by $C^*(A)$. Given
such a $C^*$-algebra and using the universal property for $C^*_e(A)$
there is a $*$-representation $\sigma: C^*(A) \rightarrow C^*_e(A)$.
The kernel of this representation is called the {\em Silov ideal}.

Now we say a unital $*$-representation $\pi: C^*(A) \rightarrow
B(\mathcal{H})$ is a {\em unique extension} if given $\tau: C^*(A)
\rightarrow B(\mathcal{H})$ a unital completely positive map such
that $ \tau|_A = \pi|_A$, then $ \tau = \pi$. It is shown in
\cite{Arveson:2005} that the Silov ideal will be contained in the
kernel of any representation which is a unique extension.  In the
terminology of Arveson we are defining $\pi$ to be a unique
extension if the map $\pi|_A$ has the unique extension property.

We will need a result relating the $C^*$-envelope, and the unique
extension property for unital completely positive maps.

\begin{prop}\label{boundary} If $A$ is a unital operator algebra and
$\pi: C^*(A) \rightarrow B(\mathcal{H})$ is a faithful
representation such that $\pi$ is a unique extension, then
$C^*(A)\cong C^*_e(A)$. \end{prop}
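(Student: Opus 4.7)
The plan is to use the Silov ideal as the bridge between $C^*(A)$ and $C^*_e(A)$, exactly as it was set up in the paragraphs preceding the proposition. By the universal property of $C^*_e(A)$, there is a canonical surjective $*$-homomorphism $\sigma: C^*(A)\to C^*_e(A)$, whose kernel is by definition the Silov ideal $J$. Thus to prove $C^*(A)\cong C^*_e(A)$ it suffices to show $J=0$.

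First I would invoke the result from \cite{Arveson:2005} quoted above: the Silov ideal is contained in the kernel of any unital $*$-representation which is a unique extension. Applied to the hypothesized $\pi:C^*(A)\to B(\mathcal{H})$, this gives $J\subseteq \ker\pi$. Since $\pi$ is assumed faithful, $\ker\pi=0$, and hence $J=0$.

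Having shown $J=0$, the induced map $\sigma$ is an injective $*$-homomorphism, and by construction it is surjective onto $C^*_e(A)$ and restricts to the completely isometric embedding $\iota_A$ on $A$. Therefore $\sigma$ is a $*$-isomorphism $C^*(A)\cong C^*_e(A)$ that is the identity on $A$.

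The proof is essentially a one-line consequence of Arveson's theorem, so there is no serious obstacle; the only thing to be careful about is quoting the correct form of Arveson's result (that the unique extension property for $\pi|_A$, rather than some stronger notion like being a boundary representation, suffices to force the Silov ideal into $\ker\pi$). Once that citation is in place, faithfulness of $\pi$ immediately collapses the Silov ideal and the universal property of $C^*_e(A)$ delivers the isomorphism.
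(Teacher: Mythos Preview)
Your proposal is correct and follows essentially the same route as the paper: use Arveson's result that the Silov ideal lies in the kernel of any unique-extension representation, then invoke faithfulness of $\pi$ to conclude the Silov ideal vanishes. The only cosmetic difference is that the paper cites the specific intermediate step (unique extension $\Rightarrow$ maximal u.c.p.\ map via \cite[Proposition~2.2]{Arveson:2005}, then \cite[Corollary~3.3]{Arveson:2005}) rather than quoting the conclusion directly.
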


\begin{proof} Since $\pi$ is a unique extension we
know \cite[Proposition 2.2]{Arveson:2005} that $\pi$ is a maximal
unital completely positive map.  Hence, from the proof of
\cite[Corollary 3.3]{Arveson:2005} we see that the Silov ideal for
$A$ is contained in $ \ker \pi$.  But $ \ker \pi$ is trivial since
$\pi$ is faithful and hence $ C^*(A) \cong C^*_e(A)$.\end{proof}

\section{The main theorem}

In this section we present our main theorem relating the
$C^*$-envelope of a universal free product to the free product of
the $C^*$-envelopes.  First let us say that $A$ has the {\em unique
extension property} if every faithful $*$-representation of
$C^*_e(A)$ is a unique extension.

\begin{thm}\label{main} Let $A_i$ be a collection of unital operator
algebras, with common unital $C^*$-subalgebra $D$. If $A_i$ has the
unique extension property and $ C^*(A_i) \cap C^*(A_j) = D$ for all
$i \neq j$ when viewed as subalgebras of $C^*_e( *_D A_i)$ then
$C^*_e(*_DA_i)$ is $*$-isomorphic to $*_D C^*_e(A_i)$. \end{thm}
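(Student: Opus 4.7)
My plan is to show that $*_D C^*_e(A_i)$ satisfies the universal property of $C^*_e(*_D A_i)$ via Proposition \ref{boundary}. The compositions $A_i \hookrightarrow C^*_e(A_i) \hookrightarrow *_D C^*_e(A_i)$ are completely isometric homomorphisms agreeing on $D$, so the universal property of the operator algebra free product produces a completely contractive homomorphism $\Phi: *_D A_i \to *_D C^*_e(A_i)$ whose image generates $*_D C^*_e(A_i)$ as a $C^*$-algebra, since each $A_i$ already generates $C^*_e(A_i)$.

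To upgrade $\Phi$ to a complete isometry I would build a $*$-homomorphism $\Psi: *_D C^*_e(A_i) \to C^*_e(*_D A_i)$ in the reverse direction and check that $\Psi \circ \Phi$ coincides with the canonical completely isometric inclusion $*_D A_i \hookrightarrow C^*_e(*_D A_i)$. The universal property of $C^*_e(A_i)$ provides a surjective $*$-homomorphism $\sigma_i: C^*(A_i) \to C^*_e(A_i)$ which is the identity on $A_i$, where $C^*(A_i) \subseteq C^*_e(*_D A_i)$. The central intermediate claim is that each $\sigma_i$ is an isomorphism: pick a faithful representation $\rho: C^*_e(*_D A_i) \to B(\mathcal{H})$, note that $\rho|_{C^*(A_i)}$ is faithful, and argue via the UEP of $A_i$ that it is a unique extension of $\rho|_{A_i}$; Proposition \ref{boundary} then yields $C^*(A_i) \cong C^*_e(A_i)$. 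With these identifications in hand, the hypothesis $C^*(A_i) \cap C^*(A_j) = D$ together with the universal property of the $C^*$-algebra free product furnish $\Psi$.

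For the final step I would verify the unique extension hypothesis of Proposition \ref{boundary} for the completely isometric embedding $*_D A_i \hookrightarrow *_D C^*_e(A_i)$. Pick a faithful $*$-representation $\rho: *_D C^*_e(A_i) \to B(\mathcal{H})$; each restriction $\rho|_{C^*_e(A_i)}$ is faithful, so by the UEP of $A_i$ it is a unique extension of $\rho|_{A_i}$. If a UCP map $\tau: *_D C^*_e(A_i) \to B(\mathcal{H})$ agrees with $\rho$ on $*_D A_i$, then $\tau = \rho$ on each $C^*_e(A_i)$ by unique extension, which places each $C^*_e(A_i)$ in the multiplicative domain of $\tau$. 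Since the $C^*_e(A_i)$'s generate $*_D C^*_e(A_i)$ as a $C^*$-algebra, the multiplicative domain is all of $*_D C^*_e(A_i)$; thus $\tau$ is a $*$-homomorphism agreeing with $\rho$ on a generating set, so $\tau = \rho$, and Proposition \ref{boundary} gives $*_D C^*_e(A_i) \cong C^*_e(*_D A_i)$. The main obstacle I anticipate is the intermediate identification $C^*(A_i) \cong C^*_e(A_i)$: here one must extract an Arveson-style unique extension property for the specific restriction $\rho|_{A_i}$ from the hypothesis that every faithful representation of $C^*_e(A_i)$ is a unique extension, and then combine it with the intersection hypothesis to globally assemble $\Psi$.
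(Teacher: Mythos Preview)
Your final paragraph is precisely the paper's proof: take a faithful $*$-representation $\pi$ of $*_D C^*_e(A_i)$, use the unique extension property of each $A_i$ to force any competing unital completely positive $\tau$ to agree with $\pi$ on each $C^*_e(A_i)$, invoke the multiplicative domain (the paper cites Theorem~3.18 of \cite{Paulsen:2002} rather than saying ``multiplicative domain'') to conclude that $\tau$ is a $*$-homomorphism and hence equals $\pi$, and then apply Proposition~\ref{boundary}.

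Your first two paragraphs go beyond what the paper does: you try to justify that $*_D A_i$ sits completely isometrically inside $*_D C^*_e(A_i)$, which is implicitly required to invoke Proposition~\ref{boundary} but which the paper simply takes for granted. One curiosity worth flagging: the paper's written proof never uses the hypothesis $C^*(A_i)\cap C^*(A_j)=D$, and the place where you invoke it (to assemble $\Psi$) does not actually need it either, since once the $\sigma_i$ are isomorphisms the maps $\sigma_i^{-1}$ restrict to the identity on $A_i\supseteq D$ and hence automatically agree on $D$. The genuine obstacle is, as you correctly identify, the intermediate claim $C^*(A_i)\cong C^*_e(A_i)$ inside $C^*_e(*_D A_i)$. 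Be careful here: this does \emph{not} follow from the unique extension property of $A_i$ applied to an arbitrary $C^*$-cover---for instance the disk algebra is Dirichlet and hence has the UEP, yet it generates $C(\overline{\mathbb D})\neq C(\mathbb T)=C^*_e(A(\mathbb D))$ in some covers. Any argument must exploit that the ambient algebra is specifically $C^*_e(*_D A_i)$, not an arbitrary cover of $A_i$.
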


\begin{proof} We let $\pi: *_D C^*_e(A_i) \rightarrow
B(\mathcal{H})$ be a faithful $*$-representation of $*_DC^*_e(A_i)$.
We wish to show that $ \pi$ is a unique extension. We then apply
Proposition \ref{boundary} to get the result.

To do this let $ \tau: *_D C^*_e(A_i) \rightarrow B(\mathcal{H}) $
be a unital completely positive map such that $ \tau|_{*_D A_i} =
\pi|_{*_D A_i}$.  We need to show that $ \tau = \pi$.  First notice
that $ \pi_i := \pi|_{C^*_e(A_i)}$ is a faithful representation of
$C^*_e(A_i)$ and hence $\pi_i$ is a unique extension relative to
$A_i$.  Next we see that $ \tau|_{A_i} = \pi_i$, and hence $
\tau_i:= \tau|_{C^*_e(A_i)} = \pi_i$.  Now $\tau$ is a unital
completely positive map, hence when we apply Theorem 3.18 of
\cite{Paulsen:2002} we see that $\tau$ is a $*$-representation on
the $C^*$-algebra generated by $ \{ C^*_e(A_i) \}$.  It follows that
$\tau$ is a $*$-representation and hence $\tau = \pi$. \end{proof}

\begin{rmk} If the operator algebras $A_i$ have contractive approximate
identities, then we can as in \cite{Meyer:2001} adjoin a unit to
$A_i$ in a unique way such that $A_i$ imbeds completely
isometrically into the unitization $(A_i)^+$.  The free product $*_D
A_i$ will then be the subalgebra of $*_{D^+} (A_i^+)$ generated by
the $A_i$.  In a similar manner we have that $ C^*_e( *_D(A_i))$
will be the $C^*$-subalgebra of $*_{D^+}C^*_e(A^+)$ generated by the
$A_i$.  This $C^*$-algebra, however is completely isometrically
isomorphic to $*_D A_i$ and hence the result still carries through
for algebras with contractive approximate identities. \end{rmk}

This theorem can be seen as an analogue of a similar result for the
maximal $C^*$-dilation of the free products, see
\cite{Blecher:1999}. The proof is more complicated however, since
the $C^*$-envelope has the opposite universal property that one
would want.

Of course to apply this theorem we need to know when an operator
algebra has the unique extension property.  In the paper
\cite{Blecher-Labuschagne:2003} a seemingly stronger property was
shown to hold for the hierarchy of classes of operator algebras:
\[\begin{array}{c} \{\mbox{Operator algebras with factorization}\} \\
\cap \\ \{\mbox{Logmodular algebras}\} \\ \cap
\\ \{\mbox{Logrigged algebras} \}.\end{array} \]  These classes include:
certain nest algebras \cite{Pitts:1988} and the finite maximal
subdiagonal algebras of a von Neumann algebra \cite{Ji-Saito:1998}.

We now show that the unique extension property for an operator
algebra is in fact not weaker than the property used in
\cite{Blecher-Labuschagne:2003}.

\begin{prop} Let $A$ be a unital operator algebra then the following are
equivalent: \begin{enumerate} \item[(a)] $A$ has the unique
extension property. \item[(b)] Every $*$-representation of
$C^*_e(A)$ is a unique extension.
\end{enumerate} \end{prop}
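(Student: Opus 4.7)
The implication (b)$\Rightarrow$(a) is immediate, since every faithful $*$-representation is in particular a $*$-representation, so the substantive content is (a)$\Rightarrow$(b). The plan is to reduce an arbitrary $*$-representation to a faithful one by taking a direct sum with a fixed faithful $*$-representation of $C^*_e(A)$ (which exists, e.g.\ by composing the canonical inclusion of $C^*_e(A)$ into its universal $C^*$-algebra with an appropriate representation, or more simply by using any faithful representation of the $C^*$-algebra $C^*_e(A)$).

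Concretely, I would fix a faithful $*$-representation $\rho: C^*_e(A)\to B(\mathcal{K})$ and, given an arbitrary $*$-representation $\pi:C^*_e(A)\to B(\mathcal{H})$, form $\sigma := \pi\oplus \rho: C^*_e(A)\to B(\mathcal{H}\oplus \mathcal{K})$. Since $\rho$ is faithful, $\sigma$ is a faithful $*$-representation, so by hypothesis (a) it has the unique extension property. Now let $\tau:C^*_e(A)\to B(\mathcal{H})$ be any unital completely positive map with $\tau|_A=\pi|_A$; then $\tau\oplus\rho$ is unital completely positive (direct sums preserve the UCP property, as $(\phi\oplus\psi)^{(n)}=\phi^{(n)}\oplus\psi^{(n)}$ sends positives to positives), and $(\tau\oplus\rho)|_A = \pi|_A\oplus \rho|_A = \sigma|_A$. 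The unique extension property of $\sigma$ forces $\tau\oplus\rho=\sigma=\pi\oplus\rho$, and compressing to the first summand yields $\tau=\pi$, as required.

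There is no genuine obstacle: the proof is essentially a bookkeeping argument about direct sums. The one point worth stating carefully is that the direct sum of UCP maps is UCP and that the direct sum of a $*$-representation with a faithful $*$-representation is faithful, both of which are standard. The content of the proposition is really the observation that the unique extension property is automatically inherited by every $*$-representation of $C^*_e(A)$ once it holds for the faithful ones, because any $*$-representation can be ``enlarged'' to a faithful one without disturbing its restriction to $A$.
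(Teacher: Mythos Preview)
Your proof is correct and follows essentially the same approach as the paper: both arguments reduce an arbitrary $*$-representation $\pi$ to a faithful one by forming the direct sum $\pi\oplus\rho$ with a fixed faithful representation $\rho$, invoke hypothesis (a) on the sum, and then deduce the unique extension property for $\pi$ itself. Your write-up is in fact more explicit than the paper's, spelling out why the UCP map $\tau\oplus\rho$ must equal $\pi\oplus\rho$ and hence $\tau=\pi$, whereas the paper simply asserts that the conclusion ``now follows.''
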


\begin{proof} That $b$ implies $a$ is trivial.  To see the other
direction let $\pi: C^*_e(A) \rightarrow B(\mathcal{H})$ be a
completely contractive representation.  Letting $\tau$ be a faithful
representation $\tau: C^*_e(A) \rightarrow B(\mathcal{K})$ then
notice that $ \tau \circ \pi : C^*_e(A) \rightarrow B( \mathcal{K}
\circ \mathcal{H})$ is faithful and hence is a unique extension.  It
now follows that $ \pi$ is a unique extension. \end{proof}

Another class of algebras with the unique extension property are the
Dirichlet algebras.  Recall that  a Dirichlet algebra is an operator
algebra $A$ such that $ A+A^*$ is dense in $C^*_e(A)$, see
\cite{Blecher-Labuschagne:2003}. In this case the unique extension
property is obvious since for any unital completely positive map:
$\pi: A \rightarrow B(\mathcal{H})$ we have that $\pi|_A$ uniquely
defines $\pi|_{A^*}$ and hence it uniquely defines the extension $
\tilde{\pi}: C^*_e(A) \rightarrow B(\mathcal{H})$.

\section{Semicrossed products for multivariable dynamics}

The semicrossed product algebras for multivariable dynamics were
defined in \cite{Davidson-Katsoulis:2007} as a generalization of the
semicrossed products of \cite{Peters:1984}.  Given a locally compact
Hausdorff space $X$ and $\tau_i$ a collection of continuous maps
from $X$ to $X$ there is a universal nonselfadjoint operator algebra
generated by $C_0(X)$ and contractions $S_i$ such that $ f(x)S_i =
S_if(\tau_i(x))$ for all $i$.  We denote this universal algebra by $
C_0(X) \times_{\tau} \mathbb{F}_n^+$, where $\mathbb{F}_n^+$
represents the free semigroup generated by n copies of
$\mathbb{Z}^+$ amalgamated over the identity.

In the case of a single continuous map this is the semicrossed
product defined by Peters.  We first show that these algebras can be
written as universal free products.

\begin{thm} Let $ \tau = (\tau_1,\tau_2, \cdots, \tau_n)$ denote a
collection of continuous self maps of $X$, a locally compact
Hausdorff space.  Then $C_0(X) \times_{\tau} \mathbb{F}_n^+$ is
completely isometrically isomorphic to $*_{C_0(X)} (C_0(X)
\times_{\tau_i} \mathbb{Z}^+)$.\end{thm}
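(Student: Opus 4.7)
The plan is to show that both sides of the claimed isomorphism satisfy the same universal property, and therefore must be completely isometrically isomorphic by a standard argument with universal objects.

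First I would unpack the universal property of $C_0(X) \times_\tau \mathbb{F}_n^+$: completely contractive homomorphisms into an operator algebra $S$ correspond bijectively to pairs consisting of a completely contractive representation $\rho: C_0(X) \to S$ together with contractions $T_1,\dots,T_n \in S$ satisfying $\rho(f)T_i = T_i\rho(f\circ\tau_i)$ for every $f\in C_0(X)$ and every $i$. Likewise, the universal property of each single-variable $A_i := C_0(X) \times_{\tau_i} \mathbb{Z}^+$ says that a completely contractive homomorphism $\pi_i: A_i \to S$ is the same as a pair $(\rho_i, T_i)$ of the analogous form for one index.

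Next I would verify that the natural maps $A_i \to C_0(X) \times_\tau \mathbb{F}_n^+$ are completely isometric. This is done by the universal property: any completely contractive representation $(\rho, T_i)$ of $A_i$ on some Hilbert space extends to a representation of $C_0(X) \times_\tau \mathbb{F}_n^+$ by setting the remaining contractions equal to $0$ (the covariance $\rho(f)\cdot 0 = 0 \cdot \rho(f\circ\tau_j)$ is satisfied trivially). This yields a completely contractive homomorphism back to $A_i$ that is a left inverse on a generating set, forcing the inclusion $A_i \hookrightarrow C_0(X) \times_\tau \mathbb{F}_n^+$ to be completely isometric. The same argument shows $C_0(X)$ sits inside as a common $C^*$-subalgebra.

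Having these completely isometric inclusions, the universal property of $*_{C_0(X)} A_i$ produces a completely contractive homomorphism $\Psi: *_{C_0(X)} A_i \to C_0(X) \times_\tau \mathbb{F}_n^+$. Conversely, inside $*_{C_0(X)} A_i$ we have a copy of $C_0(X)$ and contractions $s_i\in\iota_i(A_i)$ satisfying the covariance relations $f(x)s_i = s_i f(\tau_i(x))$; the universal property of $C_0(X) \times_\tau \mathbb{F}_n^+$ then produces a completely contractive homomorphism $\Phi: C_0(X) \times_\tau \mathbb{F}_n^+ \to *_{C_0(X)} A_i$ in the opposite direction. Since $\Phi$ and $\Psi$ are each determined by their action on the generating sets $C_0(X) \cup \{S_i\}$, and since they invert each other there, they are mutually inverse completely contractive isomorphisms — hence completely isometric.

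The main obstacle, such as it is, is the completely isometric embedding of each $A_i$ into the multivariable algebra; without it, the universal property of the free product only gives a completely contractive map rather than a genuine embedding. The trick of extending a representation by padding the missing contractions with zeros is the essential point, and it works precisely because the covariance relations are preserved by the zero operator. Everything else is a bookkeeping argument with the two universal properties.
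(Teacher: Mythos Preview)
Your proof is correct and follows essentially the same route as the paper: build completely contractive homomorphisms in both directions from the two universal properties, then check on the generating set $C_0(X)\cup\{S_i\}$ that they are mutual inverses. One small remark: the universal property of $*_{D} A_i$ only requires completely \emph{contractive} maps $\pi_i:A_i\to S$ agreeing on $D$, so your padding-by-zeros verification that the embeddings $A_i\hookrightarrow C_0(X)\times_\tau\mathbb{F}_n^+$ are completely isometric, while true and pleasant, is not the obstacle you make it out to be---the paper simply skips it.
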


\begin{proof} This is an application of universal properties.
Notice that the algebra $C_0(X) \times_{\tau_i} \mathbb{Z}^+$ is
generated by $C_0(X)$ and a contraction $S_i$ satisfying $ f(x)S_i =
S_if(\tau_i(x))$ for each $i$.  It follows that $ *_{C_0(X)}(C_0(X)
\times_{\tau_i} \mathbb{Z}^+)$ is generated by $C_0(X)$ and
contractions $S_i$ satisfying the covariance conditions.  By
universality there exists a completely contractive homomorphism $
\pi: C_0(X) \times_{\tau} \mathbb{F}_n^+ \rightarrow
*_{C_0(X)} (C_0(X) \times_{\tau_i}\mathbb{Z}^+)$ which is onto.

Similarly since $C_0(X) \times_{\tau} \mathbb{F}_n^+$ is generated
by $C_0(X)$ and contractive operators $S_i$ satisfying the
covariance conditions, there is for each $i$ a completely
contractive homomorphism $\pi_i: C_0(X) \times_{\tau_i} \mathbb{Z}^+
\rightarrow C_0(X) \times_{\tau} \mathbb{F}_n^+$.  Using the
universal property of free products it follows that there is a
completely contractive representation $ * \pi_i: *_{C_0(X)} (C_0(X)
\times_{\tau_i} \mathbb{Z}^+ \rightarrow C_0(X) \times_{\tau}
\mathbb{F}_n^+$.  Which is onto a generating set for $C_0(X)
\times_{\tau} \mathbb{F}_n^+$.

It follows by keeping track of $S_i$ and $C_0(X)$ under the maps
$\pi$ and $ *\pi_i$ that the two algebras are completely
isometrically isomorphic.\end{proof}

Unfortunately it is not immediate that that given a semicrossed
product $ C_0(X) \times_{\alpha} \mathbb{Z}^+$ the algebra has the
unique extension property.  We will show in the next section that
this fact is indeed true.

We focus first on the simple case where $ \alpha$ is surjective. It
is well known under these circumstances $C_0(X) \times_{\alpha}
\mathbb{Z}^+$ can be imbedded completely isometrically
isomorphically into a crossed product algebra $C_0(Y)
\times_{\alpha'} \mathbb{Z}$. In particular the isometry $S$ lifts
to a unitary and hence every element in $C^*_e(C_0(X)
\times_{\alpha} \mathbb{Z}^+)$ can be written as the norm limit of
finite polynomials given by linear combinations of elements of the
form $ U^n f_{n,m} (U^m)^*$ where $U$ is a unitary.

\begin{prop} If $\alpha$ is surjective then the operator algebra
$C_0(X) \times_{\alpha} \mathbb{Z}^+$ is Dirichlet. \end{prop}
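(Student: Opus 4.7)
The plan is to verify the Dirichlet condition $\overline{A+A^*} = C^*_e(A)$ directly, where $A = C_0(X)\times_\alpha \mathbb{Z}^+$, using the description of $C^*_e(A)$ recorded in the paragraph preceding the proposition. Namely, every element of $C^*_e(A)$ is a norm limit of finite linear combinations of monomials of the form $U^n f U^{-m}$ with $n,m \geq 0$ and $f \in C_0(X)$, where $U$ is the unitary lift of the canonical isometry $S$. Thus it suffices to show that each such monomial already lies in $A \cup A^*$.

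The key calculation is the iterated form of the covariance relation. From $Uf = (f\circ\alpha)U$ and the fact that $U$ is unitary one obtains $U f U^{-1} = f\circ\alpha$, and induction gives
\[
U^k f U^{-k} \;=\; f\circ\alpha^k \qquad (k\geq 0).
\]
Because $\alpha$ is a continuous self-map of $X$, the function $f\circ\alpha^k$ remains in $C_0(X) \subseteq A$ for every $k\geq 0$.

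I then split $U^n f U^{-m}$ into two cases. If $n \geq m$, rewrite
\[
U^n f U^{-m} \;=\; U^{n-m}\bigl(U^m f U^{-m}\bigr) \;=\; U^{n-m}(f\circ\alpha^m),
\]
which is the product of the nonnegative power $U^{n-m}$ with an element of $C_0(X)$, and therefore lies in $A$. If $m > n$, take the adjoint $(U^n f U^{-m})^* = U^m \bar f U^{-n}$ and apply the previous case with the roles of $n$ and $m$ interchanged; this places $U^m \bar f U^{-n}$ in $A$, so $U^n f U^{-m} \in A^*$. Combining the two cases, every generating monomial sits in $A \cup A^* \subseteq A + A^*$, and passing to the norm closure yields $\overline{A + A^*} = C^*_e(A)$, as required. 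There is essentially no obstacle once the structural input is accepted: the one nontrivial ingredient is the unitary lift of $S$ inside $C^*_e(A)$, and that is precisely where the surjectivity of $\alpha$ is invoked.
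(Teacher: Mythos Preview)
Your proof is correct and follows essentially the same approach as the paper: reduce to monomials $U^n f U^{-m}$, use the covariance relation to commute $f$ past powers of the unitary $U$, and split into the cases $n\geq m$ (giving an element of $A$) and $n<m$ (giving an element of $A^*$). The only cosmetic differences are that the paper pushes $f$ to the left (obtaining $(f\circ\alpha^n)U^{n-m}$ rather than your $U^{n-m}(f\circ\alpha^m)$) and handles the second case by a direct computation rather than by taking adjoints first.
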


\begin{proof} Looking at $x =  U^n f_{n,m} (U^m)^*$, since $U$ is a
unitary we have two cases.  If $n \geq m$ we have $ x = f_{n,m}
\circ \alpha^n U^{n-m} \in C_0(X) \times_{\alpha} \mathbb{Z}^+$ or
if $ n < m$ we get $ x = (U^*)^{m-n} f_{n,m} \circ \alpha^{m} \in
(C_0(X) \times_{\alpha} \mathbb{Z}^+)^*$. The result now follows.
\end{proof}

\section{Semicrossed products with the unique extension property}

For the case that $X$ is metrizable but the $\alpha_i$ are not
necessarily onto we will need a result of \cite{Muhly-Solel:1998a}
and a characterization of when this result applies to semicrossed
products. To do this we remind the reader of the following
definition.

Given Hilbert module $\mathcal{K}, \mathcal{M},$ and $ \mathcal{Q}$
over an operator algebra $A$ we say $K$ is {\em orthogonally
injective} if every contractive short exact sequence of the form \[
0 \rightarrow \mathcal{K} \rightarrow \mathcal{M} \rightarrow
\mathcal{Q} \rightarrow 0 \] has a contractive splitting.  We say
$\mathcal{Q}$ is {\em orthogonally projective} if any contractive
short exact sequence as above, has a contractive splitting. Muhly
and Solel showed in \cite{Muhly-Solel:1998a} that a representation
$\pi: A \rightarrow B(\mathcal{H})$ has the unique extension
property if and only if $\mathcal{H}$ is both orthogonally injective
and orthogonally projective.

For the semicrossed products of \cite{Davidson-Katsoulis:2007} there
is a relatively simple characterization of when a representation
$\pi:A \rightarrow B(\mathcal{H})$ is orthogonally projective and
orthogonally injective. Given a representation $\pi: C_0(X)
\times_{\alpha}\mathbb{Z}^+ \rightarrow B(\mathcal{H})$ there is an
induced representation $\overline{\pi}$ on the algebra of Borel sets
on $X$, denoted $\mathcal{B}(X)$.  The image of $\chi_{\alpha(X)}$
is a spectral projection denoted $E(\alpha(X))$.  The representation
is said to be full if $ \pi(S) \pi(S)^* = E(\alpha(X))$.  Now we
find in Proposition 6.6 of \cite{Davidson-Katsoulis:2007} that if
$\pi: C_0(X) \times_{\alpha}\mathbb{F}_n^+ \rightarrow
B(\mathcal{H})$ is a completely contractive representation then $H$
is orthogonally projective and orthogonally injective if and only if
$\pi$ is a full isometric representation.  To apply Theorem
\ref{main} we will need to verify that a faithful representation of
$C^*_e(C_0(X) \times_{\alpha} \mathbb{Z}^+)$ is a full isometric
representation for $C_0(X) \times_{\alpha} \mathbb{Z}^+$.

\begin{thm} Let $X$ be metrizable and let
$\pi: C^*_e(C_0(X) \times_{\alpha} \mathbb{Z}^+) \rightarrow
B(\mathcal{H})$ be a faithful $*$-representation.  Then
$\pi|_{C_0(X) \times_{\alpha} \mathbb{Z}^+}$ is a full isometric
representation and hence $C_0(X) \times_{\alpha}\mathbb{Z}^+$ has
the unique extension property. \end{thm}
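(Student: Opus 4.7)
The overall plan is to show that, for any faithful $*$-representation $\pi$ of $C^*_e(C_0(X) \times_\alpha \mathbb{Z}^+)$, its restriction to $A := C_0(X) \times_\alpha \mathbb{Z}^+$ is a \emph{full isometric representation} in the sense of \cite{Davidson-Katsoulis:2007}. Once full isometricity is in hand, Proposition 6.6 of \cite{Davidson-Katsoulis:2007} makes $\mathcal{H}$ orthogonally projective and injective as an $A$-module, and the Muhly--Solel characterization \cite{Muhly-Solel:1998a} then yields the unique extension property for $\pi|_A$. Since $\pi$ is arbitrary, the preceding proposition converts this into the UEP of $A$ in the sense defined earlier.

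That $\pi(S)$ is an isometry is immediate: Peters' construction in \cite{Peters:1984} embeds $A$ completely isometrically in a $C^*$-algebra in which $S$ is realized as an isometry, so the relation $S^*S = 1$ descends to $C^*_e(A)$, and $\pi(S)^*\pi(S) = 1$ follows. The fullness condition requires $\pi(S)\pi(S)^* = E(\alpha(X))$, where $E$ is the spectral measure of $\bar\pi|_{C_0(X)}$; metrizability of $X$ (hence $\sigma$-compactness) makes $\alpha(X)$ an $F_\sigma$ set, so $E(\alpha(X))$ is unambiguously defined. One inclusion is soft: for $f \in C_0(X)$ vanishing on $\alpha(X)$, the covariance $fS = S(f \circ \alpha) = 0$ gives $\pi(f)\pi(SS^*) = 0$; taking a spectral supremum over such $f$ in the bicommutant of $\bar\pi(C_0(X))$ yields $\bar\pi(\chi_{X \setminus \alpha(X)})\pi(SS^*) = 0$, i.e., $\pi(SS^*) \leq E(\alpha(X))$.

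The main obstacle is the reverse inclusion $\pi(SS^*) \geq E(\alpha(X))$. The cleanest route I see is to identify $C^*_e(A)$ with the Cuntz--Pimsner algebra $\mathcal{O}_E$ of the $C^*$-correspondence $E$ over $C_0(X)$ determined by $\alpha$, invoking the identification of $C^*$-envelopes of tensor algebras in \cite{Katsoulis-Kribs:2006} and \cite{Muhly-Solel:1998} (in the single variable case the tensor algebra and the semicrossed product coincide, so this identification is available). The defining Cuntz--Pimsner covariance relation then encodes precisely that $SS^*$ realizes the projection onto the image of the left action, which in any faithful $*$-representation of $\mathcal{O}_E$ is exactly the spectral projection $E(\alpha(X))$. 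Metrizability of $X$ is what ensures the relevant Katsura ideal can be identified cleanly with $C_0(\alpha(X))$, so that the Cuntz--Pimsner covariance expresses the desired equality rather than something weaker. With both inequalities in place, $\pi|_A$ is full isometric and the theorem follows as outlined in the first paragraph.
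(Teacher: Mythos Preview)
Your plan is essentially correct, but it follows a genuinely different route from the paper's own argument. The paper does not identify $C^*_e(A)$ explicitly. Instead it invokes Theorem~6.5 of \cite{Davidson-Katsoulis:2007} to dilate the given $\pi|_A$ to a full isometric representation $\tilde\pi$ on a larger space $\mathcal{K}$; since $\tilde\pi$ is completely isometric, the universal property of the $C^*$-envelope yields a $*$-homomorphism $\theta: C^*(\tilde\pi(A)) \to \pi(C^*_e(A))$, and one then checks that $\theta$ carries $\tilde E(\alpha(X)) = \tilde\pi(S)\tilde\pi(S)^*$ to $E(\alpha(X))$ by uniqueness of spectral measures, while simultaneously carrying it to $\pi(S)\pi(S)^*$. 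This gives the equality $\pi(S)\pi(S)^* = E(\alpha(X))$ directly, without ever splitting into two inequalities and without invoking the Cuntz--Pimsner picture.

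Your approach, by contrast, imports the identification $C^*_e(A) \cong \mathcal{O}_E$ from \cite{Katsoulis-Kribs:2006} (legitimate here because for $n=1$ the tensor algebra and semicrossed product coincide) and reads fullness off the Cuntz--Pimsner covariance relation. This is more structural and arguably explains \emph{why} the relation holds, but it requires external machinery and some care in matching the Katsura ideal $J_E$ to the spectral projection $E(\alpha(X))$: the Katsura ideal is governed by $\overline{\alpha(X)}$ and its interior rather than by $\alpha(X)$ itself, so the sentence ``metrizability ensures the Katsura ideal is cleanly $C_0(\alpha(X))$'' is doing real work that you would need to justify (this reconciliation is effectively what \cite{Davidson-Katsoulis:2007} carries out in their Section~6). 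The paper's dilation argument sidesteps this entirely and stays within the Davidson--Katsoulis framework. A minor point: once you commit to the Cuntz--Pimsner route, the covariance relation gives equality outright, so your separate ``soft'' argument for $\pi(SS^*) \le E(\alpha(X))$ becomes redundant, though not incorrect.
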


\begin{proof} That $\pi$ is isometric is trivial.  We now apply Theorem 6.5
of \cite{Davidson-Katsoulis:2007} to see that $ \pi|_{C_0(X)
\times_{\alpha} \mathbb{Z}^+}$ has a dilation $ \tilde{\pi}: C_0(X)
\times_{\alpha} \mathbb{Z}+ \rightarrow B(\mathcal{K})$ such that
$\tilde{\pi}$ is a full isometric representation.  In particular if
we denote by $\tilde{E}(\alpha(X))$ the projection in
$B(\mathcal{K})$ corresponding to the Borel function
$\chi_{\alpha(X)} \in \mathcal{B}(X)$, then we know that $
\tilde{\pi}(S) \tilde{\pi}(S^*) = \tilde{E}(\alpha(X))$.  Now since
$\tilde{\pi}$ is isometric we know that there is a contractive
$*$-representation $ \theta: C^*(\tilde{\pi}(C_0(X)
\times_{\alpha}\mathbb{Z}^+)) \rightarrow
\pi(C^*_e(C_0(X)\times_{\alpha}\mathbb{Z}^+))$.  This representation
will induce a $*$-representation $\overline{\theta}$ of
$\overline{\tilde{\pi}}(\mathcal{B}(X))$ onto
$\overline{\pi}(\mathcal{B}(X))$.  It will follow by uniqueness of
spectral measures that $ \overline{\theta}(\tilde{E}(\alpha(X))) =
E(\alpha(X))$.  Now $ \overline{\theta}(\tilde{E}(\alpha(X))) =
\theta(\tilde{\pi}(S) \tilde{\pi}(S)^*)$ and hence $ E(\alpha(X)) =
\pi(S)\pi(S)^*$.  It follows that $ \pi$ is a full isometric
representation.
\end{proof}

We now have the simple corollary that applies to all the semicrossed
products for multivariable dynamics where $X$ is metrizable.

\begin{cor} Let $X$ be a metrizable topological space and $
\alpha_i$ a collection of continuous self maps.  Then $ C^*_e(C(X)
\times_{\alpha} \mathbb{F}_n^+) \cong *_{C(X)} C^*_e(C(X)
\times_{\alpha_i} \mathbb{Z}^+)$. \end{cor}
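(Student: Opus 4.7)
The plan is to chain together the three main results already established. First, by the theorem of the previous section identifying the multivariable semicrossed product as a universal amalgamated free product, we have
\[
C(X) \times_{\alpha} \mathbb{F}_n^+ \;\cong\; *_{C(X)}\bigl(C(X) \times_{\alpha_i} \mathbb{Z}^+\bigr)
\]
completely isometrically. Hence computing $C^*_e(C(X) \times_\alpha \mathbb{F}_n^+)$ reduces to computing the $C^*$-envelope of an amalgamated free product whose factors are single-variable semicrossed products sharing the common $C^*$-subalgebra $D = C(X)$.

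Next, I invoke Theorem \ref{main} with $A_i = C(X) \times_{\alpha_i} \mathbb{Z}^+$ and $D = C(X)$. The first hypothesis of Theorem \ref{main} is that each $A_i$ has the unique extension property; this is exactly the content of the preceding theorem, since $X$ is metrizable. The remaining hypothesis is the intersection condition, namely that when $C^*(A_i)$ and $C^*(A_j)$ are regarded as subalgebras of $C^*_e(*_{C(X)} A_i)$, they intersect only in $C(X)$ for $i \neq j$.

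The main obstacle is verifying this intersection condition. The strategy is to argue as follows: the unique extension property ensures that the canonical map $C^*_e(A_i) \to C^*(A_i) \subseteq C^*_e(*_D A_i)$ is a $*$-isomorphism, so the images of the different $C^*_e(A_i)$ inside $C^*_e(*_D A_i)$ are genuine copies of the single-variable envelopes. The amalgamation over $C(X)$ in the free product construction then forces the intersection of these copies to be precisely $C(X)$; one can verify this either by exhibiting faithful conditional expectations onto $C(X)$ via the standard free product construction, or by tracking generators through the universal property from the previous theorem and using the explicit description of elements as words in the $S_i$ and their adjoints over $C(X)$.

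Once both hypotheses are confirmed, Theorem \ref{main} yields
\[
C^*_e\bigl(*_{C(X)} (C(X)\times_{\alpha_i} \mathbb{Z}^+)\bigr) \;\cong\; *_{C(X)} C^*_e\bigl(C(X) \times_{\alpha_i} \mathbb{Z}^+\bigr),
\]
and combining this with the free product identification in the first paragraph gives the claimed isomorphism.
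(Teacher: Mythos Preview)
Your approach is exactly the paper's intended one: the corollary is stated without proof precisely because it is meant to follow by chaining the free-product identification of the multivariable semicrossed product, the unique extension property established just above, and Theorem~\ref{main}. So structurally you are doing nothing different.

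One comment on what you call ``the main obstacle.'' You are right that Theorem~\ref{main} carries the hypothesis $C^*(A_i)\cap C^*(A_j)=D$ inside $C^*_e(*_DA_i)$, and the paper never verifies it for the semicrossed-product case. But if you inspect the proof of Theorem~\ref{main}, that hypothesis is never actually invoked: the argument shows $\tau|_{C^*_e(A_i)}=\pi|_{C^*_e(A_i)}$ via the unique extension property, then uses the multiplicative-domain result (Paulsen, Theorem~3.18) to conclude $\tau$ is a $*$-homomorphism on the $C^*$-algebra generated by the $C^*_e(A_i)$, which is all of $*_D C^*_e(A_i)$. Nothing about intersections enters. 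So your sketch for verifying the intersection condition---while reasonable in spirit---is addressing a hypothesis that appears to be vestigial. If you want to be fully rigorous you should instead check the one implicit point the proof does rely on: that $*_D A_i$ embeds completely isometrically into $*_D C^*_e(A_i)$ so that the latter is a genuine $C^*$-cover to which Proposition~\ref{boundary} applies.
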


\bibliographystyle{plain}

\end{document}